\providecommand{\U}[1]{\protect\rule{.1in}{.1in}}
\newtheorem{theorem}{Theorem}
\theoremstyle{plain}
\newtheorem{example}{Example}
\newtheorem{lemma}{Lemma}
\newtheorem{remark}{Remark}
\numberwithin{equation}{section}
\begin{document}
\title[Finite sums and continued fractions]{Transformation formulas of finite sums\\into continued fractions}
\author{Daniel Duverney, Takeshi Kurosawa and Iekata Shiokawa}
\address{\"{y} }
\email{\"{y} }
\date{June. 17, 2020}
\subjclass{ }
\keywords{}

\begin{abstract}
We state and prove three general formulas allowing to transform formal finite
sums into formal continued fractions and apply them to generalize certain
expansions in continued fractions given by Hone and Varona.

\end{abstract}
\maketitle

\section{Introduction}

Let $n$ be a positive integer, and let $x_{1},$ $x_{2},$ $\ldots,$ $x_{n},$
$\ldots,$ $y_{1},$ $y_{2},$ $\ldots,$ $y_{n},$ $\ldots$ be indeterminates. We
define%
\begin{equation}
\sigma_{n}=\sum_{k=1}^{n}\frac{y_{k}}{x_{k}},\quad\quad\tau_{n}=\sum_{k=1}%
^{n}\left(  -1\right)  ^{k-1}\frac{y_{k}}{x_{k}}. \label{T1}%
\end{equation}
Then, $\sigma_{n}$ and $\tau_{n}$ are rational functions of the indeterminates
$x_{1},$ $x_{2},$ $\ldots,$ $x_{n},$ $y_{1},$ $y_{2},$ $\ldots,$ $y_{n}$ with
coefficients in the field $\mathbb{Q}.$ The purpose of this paper is to give
three formulas allowing to transform $\sigma_{n}$ and $\tau_{n}$ into
continued fractions of the form%
\[
R_{m}=\frac{a_{1}}{b_{1}}%
%TCIMACRO{\QATOP{{}}{+}}%
%BeginExpansion
\genfrac{}{}{0pt}{}{{}}{+}%
%EndExpansion
\frac{a_{2}}{b_{2}}%
%TCIMACRO{\QATOP{{}}{+\cdots}}%
%BeginExpansion
\genfrac{}{}{0pt}{}{{}}{+\cdots}%
%EndExpansion%
%TCIMACRO{\QATOP{{}}{+}}%
%BeginExpansion
\genfrac{}{}{0pt}{}{{}}{+}%
%EndExpansion
\dfrac{a_{m}}{b_{m}},
\]
where $m$ is an increasing function of $n$ and $a_{1},$ $a_{2},$ $\ldots,$
$a_{m},$ $b_{1},$ $b_{2},$ $\ldots,$ $b_{m}$ are rational functions of
$x_{1},$ $x_{2},$ $\ldots,$ $x_{n},$ $y_{1},$ $y_{2},$ $\ldots,$ $y_{n}$ with
coefficients in $\mathbb{Q}.$ These formulas are given by Theorems
\ref{ThEuler}, \ref{ThHone}, and \ref{ThVarona} below. \medskip

For every sequence $\left(  u_{k}\right)  _{k\geq1}$ of indeterminates, we
define $u_{0}=1$ and%
\begin{equation}
\theta u_{k}=\frac{u_{k+1}}{u_{k}},\quad\theta^{2}u_{k}=\theta\left(  \theta
u_{k}\right)  =\frac{u_{k+2}u_{k}}{u_{k+1}^{2}}\quad\left(  k\geq0\right)
.\label{fn}%
\end{equation}
By (\ref{fn}) we see at once that%
\begin{equation}
u_{k}.\theta u_{k}=u_{k+1},\quad\theta u_{k}.\theta^{2}u_{k}=\theta
u_{k+1}\quad\left(  k\geq0\right)  .\label{Rule}%
\end{equation}

\begin{theorem}
\label{ThEuler}For every positive integer $n,$%
\begin{equation}
\sum_{k=1}^{n}\left(  -1\right)  ^{k-1}\frac{y_{k}}{x_{k}}=\frac{a_{1}}{b_{1}}%
%TCIMACRO{\QATOP{{}}{+}}%
%BeginExpansion
\genfrac{}{}{0pt}{}{{}}{+}%
%EndExpansion
\frac{a_{2}}{b_{2}}%
%TCIMACRO{\QATOP{{}}{+\cdots}}%
%BeginExpansion
\genfrac{}{}{0pt}{}{{}}{+\cdots}%
%EndExpansion%
%TCIMACRO{\QATOP{{}}{+}}%
%BeginExpansion
\genfrac{}{}{0pt}{}{{}}{+}%
%EndExpansion
\dfrac{a_{n}}{b_{n}}, \label{CFEuler2}%
\end{equation}
where $a_{1}=y_{1},$ $b_{1}=x_{1},$ and
\begin{equation}
a_{k}=\theta y_{k-1}\theta x_{k-2},\quad b_{k}=\theta x_{k-1}-\theta
y_{k-1}\quad\left(  2\leq k\leq n\right)  . \label{CFEuler1}%
\end{equation}

\end{theorem}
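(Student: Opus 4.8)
The plan is to pass to the classical convergents of the terminating continued fraction and to identify them in closed form. The value of the right-hand side of (\ref{CFEuler2}) is its $n$-th convergent $P_n/Q_n$, where numerators and denominators satisfy the fundamental recurrences
\begin{equation*}
P_m = b_m P_{m-1} + a_m P_{m-2}, \qquad Q_m = b_m Q_{m-1} + a_m Q_{m-2},
\end{equation*}
with $P_{-1}=1$, $P_0=0$, $Q_{-1}=0$, $Q_0=1$. Hence (\ref{CFEuler2}) reduces to the formal identity $P_n/Q_n = \tau_n$ in the field $\mathbb{Q}(x_1,x_2,\ldots,y_1,y_2,\ldots)$; since the $x_k,y_k$ are indeterminates this is a purely algebraic equality and no convergence question arises. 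Using (\ref{fn}) and $x_0=1$, I would first rewrite the coefficients (\ref{CFEuler1}) as $a_k = (y_k/y_{k-1})(x_{k-1}/x_{k-2})$ and $b_k = x_k/x_{k-1}-y_k/y_{k-1}$ for $k\ge 2$, alongside $a_1=y_1$, $b_1=x_1$.

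The heart of the argument is to guess and then verify by induction the remarkably clean forms $Q_n = x_n$ and $P_n = x_n\tau_n$. The base cases $Q_0=1=x_0$, $Q_1=x_1$ and $P_0=0=x_0\tau_0$, $P_1=y_1=x_1\tau_1$ are immediate (reading $\tau_0=0$). For the denominator step, substituting $Q_{n-1}=x_{n-1}$ and $Q_{n-2}=x_{n-2}$ into the recurrence gives
\begin{equation*}
Q_n = \Big(\tfrac{x_n}{x_{n-1}}-\tfrac{y_n}{y_{n-1}}\Big)x_{n-1} + \tfrac{y_n}{y_{n-1}}\tfrac{x_{n-1}}{x_{n-2}}\,x_{n-2} = x_n,
\end{equation*}
the two $y$-terms cancelling. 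For the numerator step I would use the same cancellation together with the first-difference identity $\tau_{n-1}-\tau_{n-2}=(-1)^{n}\,y_{n-1}/x_{n-1}$, read off directly from (\ref{T1}). Substituting $P_{n-1}=x_{n-1}\tau_{n-1}$ and $P_{n-2}=x_{n-2}\tau_{n-2}$ then yields
\begin{equation*}
P_n = x_n\tau_{n-1} - \tfrac{y_n x_{n-1}}{y_{n-1}}\big(\tau_{n-1}-\tau_{n-2}\big) = x_n\tau_{n-1} + (-1)^{n-1}y_n = x_n\tau_n,
\end{equation*}
so that $P_n/Q_n = \tau_n$, which is exactly (\ref{CFEuler2}).

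As an alternative that avoids guessing $P_n$, once $Q_n=x_n$ is established one may invoke the determinant identity $P_mQ_{m-1}-P_{m-1}Q_m = (-1)^{m-1}\prod_{k=1}^{m}a_k$; the product telescopes by (\ref{Rule}) to $\prod_{k=1}^{m}a_k = y_m x_{m-1}$, whence $P_m/x_m - P_{m-1}/x_{m-1} = (-1)^{m-1}y_m/x_m$, and summing recovers $\tau_n$. In either route the algebra is routine cancellation; I expect the only genuine obstacle to be boundary bookkeeping. The pair $a_1,b_1$ obeys a rule different from the generic $a_k,b_k$, and the convention $x_0=1$ is precisely what makes the $k=2$ coefficients conform to the general pattern, so pinning down the base cases and index shifts exactly is where care is required.
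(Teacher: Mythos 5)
Your proposal is correct, and it contains the paper's proof as a special case. Both arguments hinge on the same key induction $Q_n=x_n$, carried out with identical cancellation, and your ``alternative'' route --- the determinant identity $P_mQ_{m-1}-P_{m-1}Q_m=(-1)^{m-1}a_1\cdots a_m$ together with the telescoped product $a_1\cdots a_m=y_m x_{m-1}$ and a final summation --- is precisely what the paper does, via its prepackaged formula (\ref{SumEuler}). Your primary route differs only in the last step: instead of invoking (\ref{SumEuler}) you guess the closed form $P_n=x_n\tau_n$ and verify it by the same three-term recurrence, using $\tau_{n-1}-\tau_{n-2}=(-1)^{n}y_{n-1}/x_{n-1}$; I checked the algebra and it closes correctly, including the base cases with $\tau_0=0$ and the role of $x_0=1$ in making $a_2$ fit the generic pattern. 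What this buys you is a self-contained proof that never needs (\ref{Delta}) or (\ref{SumEuler}); what the paper's choice buys is reuse of machinery (the transformation formulas of Section \ref{sec:notaion}) that is needed anyway for Theorems \ref{ThHone} and \ref{ThVarona}. Either way the argument is complete.
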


Theorem \ref{ThEuler} is a mere rewording of Euler's well-known formula
\cite{Euler}%
\[
\frac{1}{A}-\frac{1}{B}+\frac{1}{C}-\frac{1}{D}+\cdots=\frac{1}{A}%
%TCIMACRO{\QATOP{{}}{+}}%
%BeginExpansion
\genfrac{}{}{0pt}{}{{}}{+}%
%EndExpansion
\frac{A^{2}}{B-A}%
%TCIMACRO{\QATOP{{}}{+}}%
%BeginExpansion
\genfrac{}{}{0pt}{}{{}}{+}%
%EndExpansion
\frac{B^{2}}{C-B}%
%TCIMACRO{\QATOP{{}}{+}}%
%BeginExpansion
\genfrac{}{}{0pt}{}{{}}{+}%
%EndExpansion
\dfrac{C^{2}}{D-C}%
%TCIMACRO{\QATOP{{}}{+\cdots}}%
%BeginExpansion
\genfrac{}{}{0pt}{}{{}}{+\cdots}%
%EndExpansion
.
\]
Hence Theorem \ref{ThEuler} is far from being new. However, it seems
interesting to state and prove it by using the operator $\theta.$

\begin{theorem}
\label{ThHone}For every integer $n\geq1,$%
\begin{equation}
\sum_{k=1}^{n}\frac{y_{k}}{x_{k}}=\frac{a_{1}}{b_{1}}%
%TCIMACRO{\QATOP{{}}{+}}%
%BeginExpansion
\genfrac{}{}{0pt}{}{{}}{+}%
%EndExpansion
\frac{a_{2}}{b_{2}}%
%TCIMACRO{\QATOP{{}}{+\cdots}}%
%BeginExpansion
\genfrac{}{}{0pt}{}{{}}{+\cdots}%
%EndExpansion%
%TCIMACRO{\QATOP{{}}{+}}%
%BeginExpansion
\genfrac{}{}{0pt}{}{{}}{+}%
%EndExpansion
\dfrac{a_{2n}}{b_{2n}}, \label{Hone3}%
\end{equation}
where $a_{1}=y_{1},$ $b_{1}=x_{1}-y_{1},$ and for $k\geq1$%
\begin{align}
a_{2k}  &  =\theta y_{k-1},\quad a_{2k+1}=\theta^{2}y_{k-1},\label{Hone41}\\
\quad b_{2k}  &  =x_{k-1},\quad b_{2k+1}=\frac{\theta^{2}x_{k-1}-\theta
^{2}y_{k-1}}{x_{k-1}}. \label{Hone42}%
\end{align}

\end{theorem}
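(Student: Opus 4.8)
The plan is to work with the sequence of convergents of the continued fraction on the right-hand side of (\ref{Hone3}). Write $A_m/B_m$ for the $m$-th convergent of $\frac{a_1}{b_1}+\frac{a_2}{b_2}+\cdots$, so that $A_{-1}=1$, $A_0=0$, $B_{-1}=0$, $B_0=1$ and
\[
A_m=b_m A_{m-1}+a_m A_{m-2},\qquad B_m=b_m B_{m-1}+a_m B_{m-2}\quad(m\ge 1).
\]
Since the value of the finite continued fraction in (\ref{Hone3}) is by definition $A_{2n}/B_{2n}$, the theorem amounts to the identity $A_{2n}/B_{2n}=\sum_{k=1}^{n} y_k/x_k$. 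I would prove this by a telescoping argument on the even-indexed convergents: if I can show that for each $k\ge1$ the consecutive even convergents differ by exactly the $k$-th summand, namely $\frac{A_{2k}}{B_{2k}}-\frac{A_{2k-2}}{B_{2k-2}}=\frac{y_k}{x_k}$, then summing over $k=1,\dots,n$ and using $A_0/B_0=0$ yields the claim.

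The heart of the matter is to identify the denominators, and I expect this to be the main obstacle: guessing and then verifying the right closed forms
\[
B_{2k}=x_k,\qquad B_{2k+1}=\theta x_k-\theta y_k\qquad(k\ge0),
\]
the even formula being the one ultimately needed and the odd formula the auxiliary statement that lets the induction close. These I would establish by simultaneous induction on $k$: the base cases $B_0=x_0=1$ and $B_1=b_1=x_1-y_1=\theta x_0-\theta y_0$ are immediate, and each inductive step is a short computation in which the operator identities (\ref{Rule}) make everything collapse. For instance, in the even step one has $B_{2k}=b_{2k}B_{2k-1}+a_{2k}B_{2k-2}=x_{k-1}(\theta x_{k-1}-\theta y_{k-1})+\theta y_{k-1}\,x_{k-1}=x_{k-1}\,\theta x_{k-1}=x_k$ by the rule $u_{k-1}\,\theta u_{k-1}=u_k$; the odd step is similar and uses $\theta u_{k-1}\,\theta^2u_{k-1}=\theta u_k$ to produce $\theta x_k-\theta y_k$. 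Discovering the correct form of $B_{2k+1}$ is really the only creative step.

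With the denominators in hand, the difference of convergents is handled by the classical determinant identities. From the recurrences one gets $A_mB_{m-1}-A_{m-1}B_m=(-1)^{m-1}a_1\cdots a_m$ and hence the two-step version $A_mB_{m-2}-A_{m-2}B_m=(-1)^{m}b_m\,a_1\cdots a_{m-1}$. Taking $m=2k$ makes the sign $+1$ and gives $A_{2k}B_{2k-2}-A_{2k-2}B_{2k}=b_{2k}\prod_{j=1}^{2k-1}a_j=x_{k-1}\prod_{j=1}^{2k-1}a_j$. It then remains to evaluate the product of numerators, and here (\ref{Rule}) telescopes again: grouping $a_{2i}a_{2i+1}=\theta y_{i-1}\,\theta^2y_{i-1}=\theta y_i$ and including $a_1=y_1=\theta y_0$, one finds $\prod_{j=1}^{2k-1}a_j=\prod_{i=0}^{k-1}\theta y_i=y_k/y_0=y_k$.

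Combining the pieces, $\frac{A_{2k}}{B_{2k}}-\frac{A_{2k-2}}{B_{2k-2}}=\frac{x_{k-1}y_k}{B_{2k}B_{2k-2}}=\frac{x_{k-1}y_k}{x_kx_{k-1}}=\frac{y_k}{x_k}$, which is exactly the telescoping increment sought. Summing over $k$ gives $A_{2n}/B_{2n}=\sum_{k=1}^{n} y_k/x_k$ and completes the proof. The only genuinely delicate point is the simultaneous induction for the denominators; everything else is bookkeeping with the determinant formulas and repeated use of the two product rules in (\ref{Rule}).
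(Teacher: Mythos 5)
Your proof is correct and follows essentially the same route as the paper: the same simultaneous induction giving $Q_{2k}=x_k$ and $Q_{2k+1}=\theta x_{k}-\theta y_{k}$, and the same telescoping product of numerators $a_1a_2\cdots a_{2k-1}=y_k$. The only difference is presentational: you telescope the even convergents directly via the two-step determinant identity $P_{2k}Q_{2k-2}-P_{2k-2}Q_{2k}=b_{2k}a_1\cdots a_{2k-1}$, whereas the paper packages exactly this computation as Lemma \ref{LemTransfHone} before substituting.
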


Theorem \ref{ThHone} has been given by Hone in \cite{Hone} in the special case
where $y_{k}=1$ for every $k\geq1$ and $x_{k}$ is a sequence of positive
integers such that $x_{1}\geq2$ and $x_{k}$ divides $\theta^{2}x_{k}-1$ for
every $k\geq1.$ In this case, (\ref{Hone3}) leads to the expansion of the
infinite series $\sum_{k=1}^{+\infty}x_{k}^{-1}$ in regular continued fraction.

\begin{theorem}
\label{ThVarona}For every integer $n\geq2,$%
\begin{equation}
\sum_{k=1}^{n}\left(  -1\right)  ^{k-1}\frac{y_{k}}{x_{k}}=\frac{a_{1}}{b_{1}}%
%TCIMACRO{\QATOP{{}}{+}}%
%BeginExpansion
\genfrac{}{}{0pt}{}{{}}{+}%
%EndExpansion
\frac{a_{2}}{b_{2}}%
%TCIMACRO{\QATOP{{}}{+\cdots}}%
%BeginExpansion
\genfrac{}{}{0pt}{}{{}}{+\cdots}%
%EndExpansion%
%TCIMACRO{\QATOP{{}}{+}}%
%BeginExpansion
\genfrac{}{}{0pt}{}{{}}{+}%
%EndExpansion
\dfrac{a_{3n-4}}{b_{3n-4}}, \label{Varona4}%
\end{equation}
where
\begin{align}
a_{1}  &  =y_{1}^{2},\quad a_{2}=x_{1}y_{2},\quad a_{3}=\theta y_{2},\quad
a_{4}=x_{1},\label{Varona5}\\
b_{1}  &  =x_{1}y_{1},\quad b_{2}=\theta x_{1}-\theta y_{1},\quad b_{3}%
=\theta^{2}x_{1}-x_{1},\quad b_{4}=1, \label{Varona6}%
\end{align}
and for $k\geq2$%
\begin{align}
a_{3k-1}  &  =y_{k+1},\quad a_{3k}=y_{k}\theta^{2}y_{k},\quad a_{3k+1}%
=1,\label{Varona7}\\
\quad b_{3k-1}  &  =x_{k}y_{k}-y_{k+1},\quad b_{3k}=\frac{\theta^{2}%
x_{k}-\theta^{2}y_{k}}{x_{k}}-1,\quad b_{3k+1}=1 \label{Varona8}%
\end{align}

\end{theorem}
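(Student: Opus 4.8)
The plan is to pass from the finite continued fraction on the right of (\ref{Varona4}) to its canonical convergents and to prove the identity by induction on $n$, working throughout in the field of rational functions so that matching a single rational value suffices. Write the coefficients $a_j,b_j$ of (\ref{Varona5})--(\ref{Varona8}) as one infinite sequence (the formulas depend only on how many terms are kept, not on $n$), and let $p_m,q_m$ be defined by $p_{-1}=1$, $p_0=0$, $q_{-1}=0$, $q_0=1$ and
\begin{equation*}
p_m=b_mp_{m-1}+a_mp_{m-2},\qquad q_m=b_mq_{m-1}+a_mq_{m-2},
\end{equation*}
so that $R_m=p_m/q_m$ is the $m$-th convergent. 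Since $3n-4=3(n-1)-1$, the assertion (\ref{Varona4}) is exactly the statement $R_{3k-1}=\tau_{k+1}$ for every $k\ge1$, where $\tau_{k+1}=\sum_{j=1}^{k+1}(-1)^{j-1}y_j/x_j$; thus the three-terms-per-step block of (\ref{Varona7})--(\ref{Varona8}) is matched to one new summand of $\tau$.

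The engine is a closed form for the convergents at the block ends, to be guessed from the first cases and then verified by induction. Direct computation for small $m$ suggests
\begin{equation*}
q_{3k-1}=\Big(\prod_{i=1}^{k}y_i\Big)\Big(\prod_{\substack{1\le i\le k+1\\ i\ne k}}x_i\Big),\qquad p_{3k-1}=q_{3k-1}\,\tau_{k+1},
\end{equation*}
the second being the goal once the first is in hand. First I would settle by hand the base values $R_1=y_1^2/(x_1y_1)=\tau_1$ and $R_2=\tau_2$, together with the exceptional first block (the terms $a_2,a_3,a_4$, $b_2,b_3,b_4$ of (\ref{Varona5})--(\ref{Varona6})). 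Then, for $k\ge2$, I would run the recurrence across one generic block, through the indices $3k-1,3k,3k+1,3k+2$ with the coefficients (\ref{Varona7})--(\ref{Varona8}), carrying closed forms for the two intermediate denominators $q_{3k},q_{3k+1}$ as well. The key simplifications are forced by the multiplicative rules (\ref{Rule}), $u_k\theta u_k=u_{k+1}$ and $\theta u_k\,\theta^2u_k=\theta u_{k+1}$, which turn the $\theta^2$-factors of $a_{3k},b_{3k}$ into telescoping products; the bulky intermediate expressions then collapse at each block end, exactly as in the passage $q_2,q_3,q_4\mapsto q_5=x_1x_3y_1y_2$, where the $\theta^2$-contributions cancel.

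A convenient shortcut, which avoids writing $p_m$ at all, uses the determinant identity $p_mq_{m-1}-p_{m-1}q_m=(-1)^{m-1}\prod_{i=1}^{m}a_i$. It gives
\begin{equation*}
R_{3k+2}-R_{3k-1}=\sum_{m=3k}^{3k+2}\frac{(-1)^{m-1}\prod_{i=1}^{m}a_i}{q_mq_{m-1}},
\end{equation*}
and the plan is to show that, after inserting the closed forms for $q_{3k-1},\dots,q_{3k+2}$ and the products of the $a_i$, the right-hand side collapses to $\tau_{k+2}-\tau_{k+1}=(-1)^{k+1}y_{k+2}/x_{k+2}$. Combined with the inductive hypothesis $R_{3k-1}=\tau_{k+1}$ this advances the induction to $R_{3k+2}=\tau_{k+2}$ and proves (\ref{Varona4}).

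I expect the main obstacle to be precisely this block-of-three inductive step: one must guess correct closed forms for the two \emph{intermediate} denominators $q_{3k},q_{3k+1}$, which unlike $q_{3k-1}$ are not bare products but carry extra additive $\theta^2$ terms, and then check that the recurrence together with (\ref{Rule}) produces exactly the cancellations above. An alternative route would be to regard (\ref{Varona4}) as a threefold extension of Euler's continued fraction of Theorem \ref{ThEuler}, whose convergents are the $\tau_j$ themselves, and to verify that contracting the convergents of (\ref{Varona4}) at the indices $2,5,8,\dots$ reproduces those of Theorem \ref{ThEuler}; but the every-third-convergent contraction is less transparent than the direct block induction, so I would keep it only as a consistency check.
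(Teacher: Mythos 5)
Your overall architecture --- closed forms for the convergent denominators at the block boundaries, proved by induction, followed by a block-of-three telescoping via the determinant identity --- is exactly the paper's (its Lemma \ref{LemTransfVarona} is precisely your formula for $R_{3k+2}-R_{3k-1}$, and it likewise computes $Q_{3k-1}$, $Q_{3k}$, $Q_{3k+1}$ by induction). But the key closed form you propose to induct on is wrong. The correct values are $q_2=y_1x_2$ and
\[
q_{3k-1}=x_1\,y_1y_2\cdots y_k\,x_{k+1}\qquad(k\ge2),
\]
not $\bigl(\prod_{i=1}^{k}y_i\bigr)\prod_{1\le i\le k+1,\,i\ne k}x_i$: your pattern fits $k=1$ and $k=2$ (where the two expressions happen to coincide) but already fails at $k=3$, where $q_8=x_1y_1y_2y_3x_4$ while your guess carries a spurious factor $x_2$; for general $k$ it carries spurious factors $x_2\cdots x_{k-1}$. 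With those extra factors the sum $\sum_{m=3k}^{3k+2}(-1)^{m-1}\bigl(\prod_{i\le m}a_i\bigr)/(q_mq_{m-1})$ does not collapse to $(-1)^{k+1}y_{k+2}/x_{k+2}$, so the inductive step fails as stated. The irregularity that misled you comes from the exceptional first block $(a_2,a_3,a_4)$: it injects a single fixed factor $x_1$ into every $q_m$ with $m\ge3$, which after only two blocks looks deceptively like the start of a growing product.

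The paper sidesteps exactly this nuisance by first proving an equivalent statement (Theorem \ref{ThVarona1}) in which $a_3$ and $b_3$ are divided by $x_1$ and $a_4$ is replaced by $1$; in that normalization $Q_{3k-1}=y_1\cdots y_k\,x_{k+1}$ holds uniformly for all $k\ge1$, and the intermediate denominators --- which you correctly identify as the main obstacle but leave unguessed --- take the clean forms $Q_{3k}=y_1\cdots y_k(\theta x_{k+1}-x_{k+1}+\theta y_{k+1})$ and $Q_{3k+1}=y_1\cdots y_k(\theta x_{k+1}+\theta y_{k+1})$. Theorem \ref{ThVarona} is then recovered by the elementary equivalence transformation multiplying $a_3,b_3,a_4$ by $x_1$. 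To repair your argument you should either adopt such a normalization or carry the constant factor $x_1$ explicitly through all three closed forms; as written, the central formula is incorrect and the decisive cancellations are asserted rather than performed.
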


Theorem \ref{ThVarona} has been proved first by Varona \cite{Varona} in the
special case where $y_{k}=1$ for every $k\geq1$ and $x_{k}$ is a sequence of
positive integers satisfying the same conditions as in Theorem \ref{ThHone}.
In this case (\ref{Varona4}) leads to the expansion of the infinite series
$\sum_{k=1}^{+\infty}\left(  -1\right)  ^{k}x_{k}^{-1}$ in regular continued
fraction.\medskip

In Section \ref{sec:notaion}, we recall some basic fact on continued fractions
and prove transformation formulas of continued fractions into finite sums.
Theorems \ref{ThEuler} and \ref{ThHone} will be proved in Section
\ref{sec:proofEandH} and Theorem \ref{ThVarona} in Section \ref{sec:proofV}.
Finally, in Section \ref{sec:Hone} and \ref{sec:Varona} we will give examples
of applications of Theorems \ref{ThHone} and \ref{ThVarona} by generalizing
Hone and Varona expansions. Indeed, we will define the sequence $(x_{n})$ by
the recurrence relation
\[
x_{n+2}x_{n}=x_{n+1}^{2}(F_{n}(x_{n},x_{n+1})+1)\qquad(n\geq0)
\]
with the initial conditions $x_{0}=1$ and $x_{1}\in{\mathbb{Z}}_{>0}$, where
$F_{n}(X,Y)$ are nonzero polynomials with positive integer coefficients such
that $F_{n}(0,0)=0$ for all $n\geq0$. It turns out that $(x_{n})$ is a
sequence of positive integers such that $x_{n}~|~x_{n+1}$ and $x_{n}%
~|~F_{n}(x_{n},x_{n+1})$ for every $n\geq0$. For any positive integer $h$, we
define the series
\[
S=\sum_{n=0}^{\infty}\frac{h^{n}}{x_{n+1}}.
\]
Applying Theorem \ref{ThHone} with $y_{n}=h^{n}$ and letting $n\rightarrow
\infty$, we have
\begin{equation}
S=\frac{a_{1}}{b_{1}}%
%TCIMACRO{\QATOP{{}}{+}}%
%BeginExpansion
\genfrac{}{}{0pt}{}{{}}{+}%
%EndExpansion
\frac{a_{2}}{b_{2}}%
%TCIMACRO{\QATOP{{}}{+\cdots}}%
%BeginExpansion
\genfrac{}{}{0pt}{}{{}}{+\cdots}%
%EndExpansion%
%TCIMACRO{\QATOP{{}}{+}}%
%BeginExpansion
\genfrac{}{}{0pt}{}{{}}{+}%
%EndExpansion
\dfrac{a_{n}}{b_{n}}%
%TCIMACRO{\QATOP{{}}{+\cdots}}%
%BeginExpansion
\genfrac{}{}{0pt}{}{{}}{+\cdots}%
%EndExpansion
, \label{CFS}%
\end{equation}
where $a_{1}=1,$ $b_{1}=x_{1}-h,$ and for $k\geq1$%
\begin{align*}
a_{2k}  &  =h,\quad a_{2k+1}=1,\\
\quad b_{2k}  &  =x_{k-1},\quad b_{2k+1}=\frac{\theta^{2}x_{k-1}-1}{x_{k-1}%
}=\frac{F_{k-1}\left(  x_{k-1},x_{k}\right)  }{x_{k-1}}.
\end{align*}
are rational integers. Similarly, using Theorem \ref{ThVarona}, we get in
(\ref{T}) the continued fraction expansion of the alternating series
\begin{equation}
T=\sum_{n=0}^{\infty}\left(  -1\right)  ^{n}\frac{h^{n}}{x_{n+1}}=\frac{a_{1}%
}{b_{1}}%
%TCIMACRO{\QATOP{{}}{+}}%
%BeginExpansion
\genfrac{}{}{0pt}{}{{}}{+}%
%EndExpansion
\frac{a_{2}}{b_{2}}%
%TCIMACRO{\QATOP{{}}{+\cdots}}%
%BeginExpansion
\genfrac{}{}{0pt}{}{{}}{+\cdots}%
%EndExpansion%
%TCIMACRO{\QATOP{{}}{+}}%
%BeginExpansion
\genfrac{}{}{0pt}{}{{}}{+}%
%EndExpansion
\dfrac{a_{n}}{b_{n}}%
%TCIMACRO{\QATOP{{}}{+\cdots} }%
%BeginExpansion
\genfrac{}{}{0pt}{}{{}}{+\cdots}
%EndExpansion
\label{CFT}%
\end{equation}
where%
\begin{align*}
a_{1}  &  =1,\quad a_{2}=hx_{1},\quad a_{3}=h,\quad a_{4}=x_{1},\\
b_{1}  &  =x_{1},\quad b_{2}=\frac{x_{2}}{x_{1}}-h,\quad b_{3}=F_{1}\left(
x_{1},x_{2}\right)  +1-x_{1},\quad b_{4}=1,
\end{align*}
and for $k\geq2$%
\begin{align*}
a_{3k-1}  &  =h^{k},\quad a_{3k}=h^{k-1},\quad a_{3k+1}=1,\\
\quad b_{3k-1}  &  =h^{k-1}\left(  x_{k}-h\right)  ,\quad b_{3k}=\frac
{F_{k}\left(  x_{k},x_{k+1}\right)  }{x_{k}}-1,\quad b_{3k+1}=1.
\end{align*}

The simplest of all sequences $(x_{n})$ satisfies the recurrence relation
\[
x_{n+2}x_{n}=x_{n+1}^{2}(x_{n}+1)\qquad(n\geq0)
\]
In the case $x_{0}=x_{1}=1,$ $(x_{n})$ is sequence A001697 of the On-line
Encyclopedia of Integer Sequences, which also satisfies%
\[
x_{n+1}=x_{n}\left(  \sum_{k=0}^{n}x_{k}\right)  \quad\left(  n\geq0\right)
.
\]
Taking $h=1$ in (\ref{CFS}) and (\ref{CFT}), we find remarkable formulas:
\begin{equation}
\left[  1;1,x_{1},1,x_{2},1,x_{3},1,x_{4},\ldots,1,x_{k},\ldots\right]
=\sum_{n=1}^{\infty}\frac{1}{x_{n}}, \label{Nouv1}%
\end{equation}%
\begin{equation}
\left[  0;1,1,1,x_{1},x_{2},x_{3},x_{4},x_{5},\ldots,x_{n},\ldots\right]
=\sum_{n=1}^{\infty}\frac{(-1)^{n-1}}{x_{n}}. \label{Nouv2}%
\end{equation}
See Examples \ref{ex:1} and \ref{ex:3} below.

\section{Notations and lemmas}

\label{sec:notaion}

For every positive integer $n,$ we define polynomials $P_{n}$ and $Q_{n}$ of
the indeterminate $a_{1},$ $a_{2},$ $\ldots,$ $b_{1},$ $b_{2},$ $\ldots$ by
$P_{0}=0,$ $Q_{0}=1$ and%
\begin{equation}
\frac{a_{1}}{b_{1}}%
%TCIMACRO{\QATOP{{}}{+}}%
%BeginExpansion
\genfrac{}{}{0pt}{}{{}}{+}%
%EndExpansion
\frac{a_{2}}{b_{2}}%
%TCIMACRO{\QATOP{{}}{+\cdots}}%
%BeginExpansion
\genfrac{}{}{0pt}{}{{}}{+\cdots}%
%EndExpansion%
%TCIMACRO{\QATOP{{}}{+}}%
%BeginExpansion
\genfrac{}{}{0pt}{}{{}}{+}%
%EndExpansion
\dfrac{a_{n}}{b_{n}}=\frac{P_{n}}{Q_{n}}\quad\left(  n\geq1\right)  .
\label{CFEuler3}%
\end{equation}
Then, we have for every $k\geq0$%
\begin{equation}
\left\{
\begin{array}
[c]{c}%
P_{k+2}=b_{k+2}P_{k+1}+a_{k+2}P_{k}\\
Q_{k+2}=b_{k+2}Q_{k+1}+a_{k+2}Q_{k}%
\end{array}
\right.  \label{Rec}%
\end{equation}
and also%
\begin{equation}
P_{k+1}Q_{k}-P_{k}Q_{k+1}=\left(  -1\right)  ^{k}a_{1}a_{2}\cdots a_{k+1}%
\quad\left(  k\geq0\right)  . \label{Delta}%
\end{equation}
From (\ref{Delta}) one obtains immediately a well-known transformation formula
of continued fractions into a finite sum: for every $n\geq1,$
\begin{equation}
\frac{a_{1}}{b_{1}}%
%TCIMACRO{\QATOP{{}}{+}}%
%BeginExpansion
\genfrac{}{}{0pt}{}{{}}{+}%
%EndExpansion
\frac{a_{2}}{b_{2}}%
%TCIMACRO{\QATOP{{}}{+\cdots}}%
%BeginExpansion
\genfrac{}{}{0pt}{}{{}}{+\cdots}%
%EndExpansion%
%TCIMACRO{\QATOP{{}}{+}}%
%BeginExpansion
\genfrac{}{}{0pt}{}{{}}{+}%
%EndExpansion
\dfrac{a_{n}}{b_{n}}=\sum_{k=0}^{n-1}\left(  -1\right)  ^{k}\frac{a_{1}%
a_{2}\cdots a_{k+1}}{Q_{k+1}Q_{k}}. \label{SumEuler}%
\end{equation}
Two other transformation formulas of continued fractions into finite sums are
given by the following lemmas.

\begin{lemma}
\label{LemTransfHone}For every integer $n\geq1,$%
\begin{equation}
\frac{a_{1}}{b_{1}}%
%TCIMACRO{\QATOP{{}}{+}}%
%BeginExpansion
\genfrac{}{}{0pt}{}{{}}{+}%
%EndExpansion
\frac{a_{2}}{b_{2}}%
%TCIMACRO{\QATOP{{}}{+\cdots}}%
%BeginExpansion
\genfrac{}{}{0pt}{}{{}}{+\cdots}%
%EndExpansion%
%TCIMACRO{\QATOP{{}}{+}}%
%BeginExpansion
\genfrac{}{}{0pt}{}{{}}{+}%
%EndExpansion
\dfrac{a_{2n}}{b_{2n}}=\sum_{k=0}^{n-1}\frac{a_{1}a_{2}\cdots a_{2k+1}%
b_{2k+2}}{Q_{2k}Q_{2k+2}}. \label{SumHone}%
\end{equation}

\end{lemma}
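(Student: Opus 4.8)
The plan is to read the left-hand side, via (\ref{CFEuler3}), as the convergent $P_{2n}/Q_{2n}$, and then to recognize the right-hand side as a telescoping sum over the \emph{even-indexed} convergents $P_{2k}/Q_{2k}$. Concretely, I would try to prove that each summand on the right is exactly the jump between consecutive even convergents, namely
\[
\frac{P_{2k+2}}{Q_{2k+2}}-\frac{P_{2k}}{Q_{2k}}=\frac{a_{1}a_{2}\cdots a_{2k+1}\,b_{2k+2}}{Q_{2k}Q_{2k+2}}\qquad(0\le k\le n-1).
\]
Granting this, the sum on the right of (\ref{SumHone}) telescopes to $P_{2n}/Q_{2n}-P_{0}/Q_{0}$, and since $P_{0}=0$, $Q_{0}=1$, this collapses to $P_{2n}/Q_{2n}$, which is the left-hand side. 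So the whole lemma reduces to verifying the displayed jump identity.

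To establish that identity I would write the left side over the common denominator $Q_{2k}Q_{2k+2}$ and compute the numerator $P_{2k+2}Q_{2k}-P_{2k}Q_{2k+2}$ using the recurrence (\ref{Rec}) one step, at index $k+2\to 2k+2$:
\[
P_{2k+2}Q_{2k}-P_{2k}Q_{2k+2}=\bigl(b_{2k+2}P_{2k+1}+a_{2k+2}P_{2k}\bigr)Q_{2k}-P_{2k}\bigl(b_{2k+2}Q_{2k+1}+a_{2k+2}Q_{2k}\bigr).
\]
The two $a_{2k+2}P_{2k}Q_{2k}$ terms cancel, leaving $b_{2k+2}\bigl(P_{2k+1}Q_{2k}-P_{2k}Q_{2k+1}\bigr)$. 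Now I invoke the determinant formula (\ref{Delta}) at index $2k$, which gives $P_{2k+1}Q_{2k}-P_{2k}Q_{2k+1}=(-1)^{2k}a_{1}a_{2}\cdots a_{2k+1}=a_{1}a_{2}\cdots a_{2k+1}$. The crucial feature here is that the index $2k$ is even, so the sign $(-1)^{2k}$ is $+1$; this is precisely why pairing the convergents two at a time produces a sum with all positive-signed terms, matching the non-alternating form of (\ref{SumHone}). Substituting back yields the desired numerator $a_{1}a_{2}\cdots a_{2k+1}\,b_{2k+2}$.

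The argument is essentially bookkeeping once the right grouping is chosen, so I do not expect a serious analytic obstacle. The one genuine decision — and the place where the proof could go wrong if approached naively — is the choice to telescope over \emph{even} convergents rather than all convergents: this is what forces the even index in (\ref{Delta}), kills the alternating sign, and makes the single factor $b_{2k+2}$ (rather than a difference or a more complicated expression) appear in the numerator. I would present the jump identity as the main step and then dispatch the telescoping and the initial-value reduction ($P_0=0$, $Q_0=1$) in a line.
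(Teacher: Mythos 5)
Your proof is correct and takes essentially the same route as the paper: both arguments amount to telescoping over the even-indexed convergents, with each jump evaluated via the determinant formula (\ref{Delta}) at an even index (killing the sign) together with the recurrence (\ref{Rec}). The only difference is bookkeeping --- the paper first invokes (\ref{SumEuler}) with $n$ replaced by $2n$, pairs consecutive terms, and simplifies the pair using the $Q$-recurrence, whereas you compute the cross-difference $P_{2k+2}Q_{2k}-P_{2k}Q_{2k+2}$ directly; the two computations are term-for-term equivalent.
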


\begin{proof}
Replacing $n$ by $2n$ in (\ref{SumEuler}), we obtain%
\begin{align*}
\frac{a_{1}}{b_{1}}%
%TCIMACRO{\QATOP{{}}{+}}%
%BeginExpansion
\genfrac{}{}{0pt}{}{{}}{+}%
%EndExpansion
\frac{a_{2}}{b_{2}}%
%TCIMACRO{\QATOP{{}}{+\cdots}}%
%BeginExpansion
\genfrac{}{}{0pt}{}{{}}{+\cdots}%
%EndExpansion%
%TCIMACRO{\QATOP{{}}{+}}%
%BeginExpansion
\genfrac{}{}{0pt}{}{{}}{+}%
%EndExpansion
\dfrac{a_{2n}}{b_{2n}}  &  =\sum_{m=0}^{2n-1}\left(  -1\right)  ^{m}%
\frac{a_{1}a_{2}\cdots a_{m+1}}{Q_{m+1}Q_{m}}\\
&  =\sum_{k=0}^{n-1}\left(  \frac{a_{1}a_{2}\cdots a_{2k+1}}{Q_{2k+1}Q_{2k}%
}-\frac{a_{1}a_{2}\cdots a_{2k+2}}{Q_{2k+2}Q_{2k+1}}\right) \\
&  =\sum_{k=0}^{n-1}a_{1}a_{2}\cdots a_{2k+1}\frac{Q_{2k+2}-a_{2k+2}Q_{2k}%
}{Q_{2k+2}Q_{2k+1}Q_{2k}},
\end{align*}
which yields (\ref{SumHone}) since $Q_{2k+2}=b_{2k+2}Q_{2k+1}+a_{2k+2}Q_{2k}$
for every $k\geq0.$
\end{proof}

\begin{lemma}
\label{LemTransfVarona}For every integer $n\geq1,$%
\begin{align}
\frac{a_{1}}{b_{1}}%
%TCIMACRO{\QATOP{{}}{+}}%
%BeginExpansion
\genfrac{}{}{0pt}{}{{}}{+}%
%EndExpansion
\frac{a_{2}}{b_{2}}%
%TCIMACRO{\QATOP{{}}{+\cdots}}%
%BeginExpansion
\genfrac{}{}{0pt}{}{{}}{+\cdots}%
%EndExpansion%
%TCIMACRO{\QATOP{{}}{+}}%
%BeginExpansion
\genfrac{}{}{0pt}{}{{}}{+}%
%EndExpansion
\dfrac{a_{3n-1}}{b_{3n-1}}  &  =\frac{a_{1}}{Q_{1}}-\frac{a_{1}a_{2}}%
{Q_{1}Q_{2}}\label{SumVarona}\\
&  \qquad+\sum_{k=1}^{n-1}\left(  -1\right)  ^{k-1}a_{1}a_{2}\cdots
a_{3k}\frac{b_{3k+1}b_{3k+2}+a_{3k+2}}{Q_{3k-1}Q_{3k+2}}.\nonumber
\end{align}

\end{lemma}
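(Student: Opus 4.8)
The plan is to imitate the proof of Lemma \ref{LemTransfHone}: start from the telescoping identity (\ref{SumEuler}) with $n$ replaced by $3n-1$, peel off the first two summands, and then collect the remaining terms three at a time. By (\ref{SumEuler}) we have
\[
\frac{a_{1}}{b_{1}}+\cdots+\frac{a_{3n-1}}{b_{3n-1}}=\sum_{m=0}^{3n-2}\left(-1\right)^{m}\frac{a_{1}a_{2}\cdots a_{m+1}}{Q_{m+1}Q_{m}}.
\]
The terms $m=0$ and $m=1$ produce $\dfrac{a_{1}}{Q_{1}}-\dfrac{a_{1}a_{2}}{Q_{1}Q_{2}}$ (using $Q_{0}=1$), matching the first two terms on the right of (\ref{SumVarona}). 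The remaining $3(n-1)$ terms, namely $m=2,\ldots,3n-2$, split into $n-1$ consecutive blocks $\{3k-1,3k,3k+1\}$ for $k=1,\ldots,n-1$.

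Next, within the $k$-th block I would factor out $\left(-1\right)^{k-1}a_{1}a_{2}\cdots a_{3k}$, using $\left(-1\right)^{3k-1}=\left(-1\right)^{3k+1}=\left(-1\right)^{k-1}$ and $\left(-1\right)^{3k}=-\left(-1\right)^{k-1}$. This reduces the whole statement to the single rational-function identity
\[
\frac{1}{Q_{3k}Q_{3k-1}}-\frac{a_{3k+1}}{Q_{3k+1}Q_{3k}}+\frac{a_{3k+1}a_{3k+2}}{Q_{3k+2}Q_{3k+1}}=\frac{b_{3k+1}b_{3k+2}+a_{3k+2}}{Q_{3k-1}Q_{3k+2}}.
\]
Summing the resulting closed form over $k$ then yields exactly the sum in (\ref{SumVarona}), so everything apart from this identity is bookkeeping on signs and indices.

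The main work, and the only genuine obstacle, is verifying this last identity, which I would handle by clearing denominators and repeatedly applying the three-term recurrence (\ref{Rec}) for the $Q$'s. Writing $j=3k$ and multiplying through by $Q_{j-1}Q_{j}Q_{j+1}Q_{j+2}$, the left-hand numerator becomes $Q_{j+2}\left(Q_{j+1}-a_{j+1}Q_{j-1}\right)+a_{j+1}a_{j+2}Q_{j-1}Q_{j}$. The relation $Q_{j+1}-a_{j+1}Q_{j-1}=b_{j+1}Q_{j}$ extracts a factor $Q_{j}$, leaving $Q_{j}\left(b_{j+1}Q_{j+2}+a_{j+1}a_{j+2}Q_{j-1}\right)$. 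Substituting $Q_{j+2}=b_{j+2}Q_{j+1}+a_{j+2}Q_{j}$ once more and then collecting $b_{j+1}a_{j+2}Q_{j}+a_{j+1}a_{j+2}Q_{j-1}=a_{j+2}Q_{j+1}$ produces $Q_{j}Q_{j+1}\left(b_{j+1}b_{j+2}+a_{j+2}\right)$, which is precisely the cleared right-hand side. This completes the reduction and hence the proof.
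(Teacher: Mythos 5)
Your proposal is correct and follows essentially the same route as the paper: apply (\ref{SumEuler}) with $3n-1$ terms, split off the $m=0,1$ terms, group the rest into blocks of three, and reduce each block with the recurrence (\ref{Rec}) via $Q_{3k+1}-a_{3k+1}Q_{3k-1}=b_{3k+1}Q_{3k}$ and $Q_{3k+2}=b_{3k+2}Q_{3k+1}+a_{3k+2}Q_{3k}$. The only cosmetic difference is that you clear denominators before applying the recurrences, whereas the paper combines the fractions step by step; the algebra is identical.
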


\begin{proof}
We know by (\ref{SumEuler}) that%
\begin{multline*}
\frac{P_{3n-1}}{Q_{3n-1}}=\frac{a_{1}}{b_{1}}%
%TCIMACRO{\QATOP{{}}{+}}%
%BeginExpansion
\genfrac{}{}{0pt}{}{{}}{+}%
%EndExpansion
\frac{a_{2}}{b_{2}}%
%TCIMACRO{\QATOP{{}}{+\cdots}}%
%BeginExpansion
\genfrac{}{}{0pt}{}{{}}{+\cdots}%
%EndExpansion%
%TCIMACRO{\QATOP{{}}{+}}%
%BeginExpansion
\genfrac{}{}{0pt}{}{{}}{+}%
%EndExpansion
\dfrac{a_{3n-1}}{b_{3n-1}}=\frac{a_{1}}{Q_{1}}-\frac{a_{1}a_{2}}{Q_{1}Q_{2}%
}+\sum_{m=2}^{3n-2}\left(  -1\right)  ^{m}\frac{a_{1}a_{2}\cdots a_{m+1}%
}{Q_{m}Q_{m+1}}\\
=\frac{a_{1}}{Q_{1}}-\frac{a_{1}a_{2}}{Q_{1}Q_{2}}+\sum_{k=1}^{n-1}\left(
-1\right)  ^{k-1}a_{1}a_{2}\cdots a_{3k}\left(  \frac{1}{Q_{3k-1}Q_{3k}}%
-\frac{a_{3k+1}}{Q_{3k}Q_{3k+1}}+\frac{a_{3k+1}a_{3k+2}}{Q_{3k+1}Q_{3k+2}%
}\right)  .
\end{multline*}
Since $Q_{3k+1}-a_{3k+1}Q_{3k-1}=b_{3k+1}Q_{3k},$ we obtain%
\[
\frac{P_{3n-1}}{Q_{3n-1}}-\frac{a_{1}}{Q_{1}}+\frac{a_{1}a_{2}}{Q_{1}Q_{2}%
}=\sum_{k=1}^{n-1}\left(  -1\right)  ^{k-1}a_{1}a_{2}\cdots a_{3k}\left(
\frac{b_{3k+1}}{Q_{3k-1}Q_{3k+1}}+\frac{a_{3k+1}a_{3k+2}}{Q_{3k+1}Q_{3k+2}%
}\right)
\]%
\begin{align*}
&  =\sum_{k=1}^{n-1}\left(  -1\right)  ^{k-1}a_{1}a_{2}\cdots a_{3k}%
\frac{b_{3k+1}Q_{3k+2}+a_{3k+1}a_{3k+2}Q_{3k-1}}{Q_{3k-1}Q_{3k+1}Q_{3k+2}}\\
&  =\sum_{k=1}^{n-1}\left(  -1\right)  ^{k-1}a_{1}a_{2}\cdots a_{3k}%
\frac{b_{3k+1}b_{3k+2}Q_{3k+1}+a_{3k+2}\left(  b_{3k+1}Q_{3k}+a_{3k+1}%
Q_{3k-1}\right)  }{Q_{3k-1}Q_{3k+1}Q_{3k+2}},
\end{align*}
which proves Lemma \ref{LemTransfVarona}.
\end{proof}

\section{Proofs of theorems \ref{ThEuler} and \ref{ThHone}}

\label{sec:proofEandH}

The two proofs are similar, and consist in transforming the continued fraction%
\[
\frac{a_{1}}{b_{1}}%
%TCIMACRO{\QATOP{{}}{+}}%
%BeginExpansion
\genfrac{}{}{0pt}{}{{}}{+}%
%EndExpansion
\frac{a_{2}}{b_{2}}%
%TCIMACRO{\QATOP{{}}{+\cdots}}%
%BeginExpansion
\genfrac{}{}{0pt}{}{{}}{+\cdots}%
%EndExpansion%
%TCIMACRO{\QATOP{{}}{+}}%
%BeginExpansion
\genfrac{}{}{0pt}{}{{}}{+}%
%EndExpansion
\dfrac{a_{m}}{b_{m}}%
\]
by (\ref{SumEuler}) and (\ref{SumHone}), with $m=n$ and $m=2n$
respectively.\medskip

\noindent\textit{Proof of Theorem \ref{ThEuler}}. With the notations of
Section \ref{sec:notaion}, we prove first by induction that $Q_{k}=x_{k}$
$(k\geq0).$ Clearly $Q_{0}=1=x_{0}$ and $Q_{1}=x_{1}.$ Assuming that
$Q_{k}=x_{k}$ and $Q_{k+1}=x_{k+1},$ we obtain by (\ref{CFEuler1}) and
(\ref{Rec})
\begin{align*}
Q_{k+2}  &  =\left(  \theta x_{k+1}-\theta y_{k+1}\right)  x_{k+1}+\left(
\theta y_{k+1}\theta x_{k}\right)  x_{k}\\
&  =x_{k+2}-\left(  \theta y_{k+1}\right)  x_{k+1}+\left(  \theta
y_{k+1}\right)  x_{k+1}=x_{k+2},
\end{align*}
which proves that $Q_{k}=x_{k}$ $(k\geq0).$ Here $P_{1}=y_{1},$ $Q_{1}=x_{1},$
and
\[
\prod_{j=1}^{k+1}a_{j}=a_{1}\prod_{j=2}^{k+1}\theta y_{j-1}\theta
x_{j-2}=y_{1}\frac{y_{k+1}x_{k}}{y_{1}x_{0}}=x_{k}y_{k+1}.
\]
Since $Q_{k}=x_{k}$ $(k\geq0),$ we obtain by (\ref{SumEuler})%
\[
\frac{a_{1}}{b_{1}}%
%TCIMACRO{\QATOP{{}}{+}}%
%BeginExpansion
\genfrac{}{}{0pt}{}{{}}{+}%
%EndExpansion
\frac{a_{2}}{b_{2}}%
%TCIMACRO{\QATOP{{}}{+\cdots}}%
%BeginExpansion
\genfrac{}{}{0pt}{}{{}}{+\cdots}%
%EndExpansion%
%TCIMACRO{\QATOP{{}}{+}}%
%BeginExpansion
\genfrac{}{}{0pt}{}{{}}{+}%
%EndExpansion
\dfrac{a_{n}}{b_{n}}=\sum_{k=0}^{n-1}\left(  -1\right)  ^{k}\frac{y_{k+1}%
}{x_{k+1}},
\]
which proves Theorem \ref{ThEuler}.\medskip

\noindent\textit{Proof of Theorem \ref{ThHone}}. We prove by induction that%
\begin{equation}
Q_{2k}=x_{k},\quad Q_{2k+1}=\theta x_{k}-\theta y_{k}\quad\left(
k\geq0\right)  . \label{Hone5}%
\end{equation}
For $k=0,$ we have $Q_{0}=1=x_{0}$ and $Q_{1}=b_{1}=x_{1}-y_{1}=\theta
x_{0}-\theta y_{0}.$ Now assuming that it is true for some $k\geq0,$ we
compute%
\begin{align*}
Q_{2k+2}  &  =b_{2k+2}Q_{2k+1}+a_{2k+2}Q_{2k}=x_{k}\left(  \theta x_{k}-\theta
y_{k}\right)  +\theta y_{k}x_{k}=x_{k+1},\\
Q_{2k+3}  &  =b_{2k+3}Q_{2k+2}+a_{2k+3}Q_{2k+1}\\
&  =\frac{\theta^{2}x_{k}-\theta^{2}y_{k}}{x_{k}}x_{k+1}+\theta^{2}%
y_{k}\left(  \theta x_{k}-\theta y_{k}\right)  =\theta x_{k+1}-\theta y_{k+1}%
\end{align*}
by using (\ref{Rule}). Hence (\ref{Hone5}) is proved by induction. Now we
apply Lemma \ref{LemTransfHone}. First we have%
\[
a_{1}a_{2}\cdots a_{2k+1}=y_{k+1}\quad\left(  k\geq0\right)  .
\]
Indeed, this is clearly true for $k=0$ since $a_{1}=y_{1}$ and
\[
a_{1}a_{2}\cdots a_{2k+3}=a_{1}a_{2}\cdots a_{2k+1}a_{2k+2}a_{2k+3}%
=y_{k+1}\theta y_{k}\theta^{2}y_{k}=y_{k+1}\theta y_{k+1}=y_{k+2}.
\]
Replacing in (\ref{SumHone}) yields%
\[
\frac{a_{1}}{b_{1}}%
%TCIMACRO{\QATOP{{}}{+}}%
%BeginExpansion
\genfrac{}{}{0pt}{}{{}}{+}%
%EndExpansion
\frac{a_{2}}{b_{2}}%
%TCIMACRO{\QATOP{{}}{+\cdots}}%
%BeginExpansion
\genfrac{}{}{0pt}{}{{}}{+\cdots}%
%EndExpansion%
%TCIMACRO{\QATOP{{}}{+}}%
%BeginExpansion
\genfrac{}{}{0pt}{}{{}}{+}%
%EndExpansion
\dfrac{a_{2n}}{b_{2n}}=\sum_{k=0}^{n-1}\frac{y_{k+1}x_{k}}{x_{k+1}x_{k}}%
=\sum_{k=1}^{n}\frac{y_{k}}{x_{k}},
\]
which proves Theorem \ref{ThHone}.\medskip

\section{Proof of Theorem \ref{ThVarona}}

\label{sec:proofV}

It is simpler to prove first a slightly different result, namely

\begin{theorem}
\label{ThVarona1}For every integer $n\geq2,$%
\[
\sum_{k=1}^{n}\left(  -1\right)  ^{k-1}\frac{y_{k}}{x_{k}}=\frac{a_{1}}{b_{1}}%
%TCIMACRO{\QATOP{{}}{+}}%
%BeginExpansion
\genfrac{}{}{0pt}{}{{}}{+}%
%EndExpansion
\frac{a_{2}}{b_{2}}%
%TCIMACRO{\QATOP{{}}{+\cdots}}%
%BeginExpansion
\genfrac{}{}{0pt}{}{{}}{+\cdots}%
%EndExpansion%
%TCIMACRO{\QATOP{{}}{+}}%
%BeginExpansion
\genfrac{}{}{0pt}{}{{}}{+}%
%EndExpansion
\dfrac{a_{3n-4}}{b_{3n-4}},
\]
where
\begin{align*}
a_{1}  &  =y_{1}^{2},\quad a_{2}=x_{1}y_{2},\quad a_{3}=\frac{\theta y_{2}%
}{x_{1}},\\
b_{1}  &  =x_{1}y_{1},\quad b_{2}=\theta x_{1}-\theta y_{1},\quad b_{3}%
=\frac{\theta x_{2}}{x_{2}}-1,
\end{align*}
and for $k\geq1$%
\begin{align*}
a_{3k+1}  &  =1,\quad a_{3k+2}=y_{k+2},\quad a_{3k+3}=y_{k+1}\theta^{2}%
y_{k+1},\\
\quad b_{3k+1}  &  =1,\quad b_{3k+2}=x_{k+1}y_{k+1}-y_{k+2},\quad
b_{3k+3}=\frac{\theta^{2}x_{k+1}-\theta^{2}y_{k+1}}{x_{k+1}}-1.
\end{align*}

\end{theorem}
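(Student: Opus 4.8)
The plan is to imitate the proofs of Theorems~\ref{ThEuler} and~\ref{ThHone}: attach to the right-hand continued fraction the convergent denominators $Q_k$ defined by $Q_0=1$ and the recurrence~(\ref{Rec}), determine them in closed form, and substitute into the transformation formula of Lemma~\ref{LemTransfVarona}. Since the displayed continued fraction has length $3n-4=3(n-1)-1$, I would apply Lemma~\ref{LemTransfVarona} with its index replaced by $n-1$; this is legitimate because $n\ge 2$ gives $n-1\ge 1$, and then the sum there runs over $k=1,\dots,n-2$.

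First I would compute the $Q_k$ by induction, exploiting the block structure of the coefficients. The base data are $Q_0=1$, $Q_1=b_1=x_1y_1$, and $Q_2=b_2Q_1+a_2Q_0=x_2y_1$. I expect the three residue classes modulo~$3$ to satisfy
\[
Q_{3k+2}=x_{k+2}\prod_{j=1}^{k+1}y_j\quad(k\ge 0),
\]
and, for $k\ge 1$,
\[
Q_{3k}=\Big(\prod_{j=1}^{k}y_j\Big)\big(\theta x_{k+1}+\theta y_{k+1}-x_{k+1}\big),\qquad
Q_{3k+1}=\Big(\prod_{j=1}^{k}y_j\Big)\big(\theta x_{k+1}+\theta y_{k+1}\big).
\]
The inductive step advances one full block at a time: from $Q_{3k+1},Q_{3k+2}$ one gets $Q_{3k+3}$, then $Q_{3k+4}=Q_{3k+3}+Q_{3k+2}$ since $a_{3k+4}=b_{3k+4}=1$, and finally $Q_{3k+5}$; each check reduces to the multiplicative identities~(\ref{Rule}) for $\theta$, which are exactly what make the $-x_{k+1}$ term and the $\theta^2$-contributions cancel. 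Along the way I would also record the partial products $a_1a_2\cdots a_{3k}=y_{k+2}\prod_{j=1}^{k}y_j^2$ $(k\ge 1)$, which follow block-by-block from $a_{3k+1}a_{3k+2}a_{3k+3}=y_{k+1}^2y_{k+3}/y_{k+2}$.

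With these in hand the conclusion is pure substitution into Lemma~\ref{LemTransfVarona}. The two leading terms give $a_1/Q_1=y_1/x_1$ and $-a_1a_2/(Q_1Q_2)=-y_2/x_2$, i.e.\ the $k=1,2$ terms of $\tau_n$. In the summand I would first simplify $b_{3k+1}b_{3k+2}+a_{3k+2}=(x_{k+1}y_{k+1}-y_{k+2})+y_{k+2}=x_{k+1}y_{k+1}$, then insert $Q_{3k-1}=x_{k+1}\prod_{j=1}^{k}y_j$ and $Q_{3k+2}=x_{k+2}\prod_{j=1}^{k+1}y_j$ together with the product formula; after cancellation the $k$-th term collapses to $(-1)^{k-1}y_{k+2}/x_{k+2}$. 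Reindexing by $m=k+2$ turns $\sum_{k=1}^{n-2}$ into $\sum_{m=3}^{n}(-1)^{m-1}y_m/x_m$, which together with the two leading terms is exactly $\sum_{k=1}^{n}(-1)^{k-1}y_k/x_k$.

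I expect the main obstacle to be the bookkeeping in the threefold induction for the $Q_k$. The initial block (indices $1,2,3$) does not obey the generic formulas, so the base case must be done by hand---computing $Q_3,Q_4,Q_5$ from the special coefficients $a_3,b_3$ before the uniform step takes over---and one must keep track of the fact that the formula for $Q_{3k+2}$ is valid already from $k=0$ whereas those for $Q_{3k}$ and $Q_{3k+1}$ hold only from $k=1$. Once~(\ref{Rule}) is used freely, the individual algebraic verifications are routine.
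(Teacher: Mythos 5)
Your proposal is correct and follows essentially the same route as the paper: an induction in blocks of three establishing $Q_{3k-1}=y_1\cdots y_k\,x_{k+1}$, $Q_{3k}=y_1\cdots y_k(\theta x_{k+1}-x_{k+1}+\theta y_{k+1})$, $Q_{3k+1}=y_1\cdots y_k(\theta x_{k+1}+\theta y_{k+1})$ (your formulas, reindexed), the product identity $a_1\cdots a_{3k}=y_{k+2}(y_1\cdots y_k)^2$, and substitution into Lemma~\ref{LemTransfVarona} using $b_{3k+1}b_{3k+2}+a_{3k+2}=x_{k+1}y_{k+1}$. All the closed forms and the telescoping match the paper's proof exactly, including the caveat that the base block must be handled separately.
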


\begin{proof}
We prove by induction that, for every $k\geq1,$%
\begin{equation}
\left\{
\begin{array}
[c]{l}%
Q_{3k-1}=y_{1}y_{2}\cdots y_{k}x_{k+1},\\
Q_{3k}=y_{1}y_{2}\cdots y_{k}\left(  \theta x_{k+1}-x_{k+1}+\theta
y_{k+1}\right)  ,\\
Q_{3k+1}=y_{1}y_{2}\cdots y_{k}\left(  \theta x_{k+1}+\theta y_{k+1}\right)  .
\end{array}
\right.  \label{Varona9}%
\end{equation}
We have $Q_{0}=1$ and $Q_{1}=b_{1}=x_{1}y_{1}.$ Therefore%
\begin{align*}
Q_{2}  &  =b_{2}Q_{1}+a_{2}Q_{0}=\left(  \theta x_{1}-\theta y_{1}\right)
x_{1}y_{1}+x_{1}y_{2}=x_{2}y_{1},\\
Q_{3}  &  =b_{3}Q_{2}+a_{3}Q_{1}=\left(  \frac{\theta x_{2}}{x_{2}}-1\right)
x_{2}y_{1}+\frac{\theta y_{2}}{x_{1}}x_{1}y_{1}=y_{1}\left(  \theta
x_{2}-x_{2}+\theta y_{2}\right)  ,\\
Q_{4}  &  =b_{4}Q_{3}+a_{4}Q_{2}=Q_{3}+Q_{2}=y_{1}(\theta x_{2}+\theta y_{2}),
\end{align*}
which proves that (\ref{Varona9}) is true for $k=1.$ Now assuming that it is
true for some $k\geq1,$ we compute%
\begin{align*}
Q_{3k+2}  &  =b_{3k+2}Q_{3k+1}+a_{3k+2}Q_{3k}\\
&  =\left(  x_{k+1}y_{k+1}-y_{k+2}\right)  y_{1}\cdots y_{k}\left(  \theta
x_{k+1}+\theta y_{k+1}\right) \\
&  \qquad\qquad\qquad+y_{k+2}y_{1}\cdots y_{k}\left(  \theta x_{k+1}%
-x_{k+1}+\theta y_{k+1}\right) \\
&  =y_{1}\cdots y_{k}\left(  x_{k+1}y_{k+1}\theta x_{k+1}+x_{k+1}y_{k+1}\theta
y_{k+1}-x_{k+1}y_{k+2}\right) \\
&  =y_{1}\cdots y_{k+1}x_{k+2},\\
Q_{3k+3}  &  =b_{3k+3}Q_{3k+2}+a_{3k+3}Q_{3k+1}\\
&  =\left(  \frac{\theta^{2}x_{k+1}-\theta^{2}y_{k+1}}{x_{k+1}}-1\right)
y_{1}\cdots y_{k+1}x_{k+2}\\
&  \qquad\qquad\qquad+y_{k+1}\theta^{2}y_{k+1}y_{1}\cdots y_{k}\left(  \theta
x_{k+1}+\theta y_{k+1}\right) \\
&  =y_{1}\cdots y_{k+1}\left(  \theta^{2}x_{k+1}\theta x_{k+1}-x_{k+2}%
+\theta^{2}y_{k+1}\theta y_{k+1}\right) \\
&  =y_{1}\cdots y_{k+1}\left(  \theta x_{k+2}-x_{k+2}+\theta y_{k+2}\right)
,\\
Q_{3k+4}  &  =b_{3k+4}Q_{3k+3}+a_{3k+4}Q_{3k+2}=Q_{3k+3}+Q_{3k+2}\\
&  =y_{1}\cdots y_{k+1}\left(  \theta x_{k+2}+\theta y_{k+2}\right)  .
\end{align*}
Hence (\ref{Varona9}) is proved by induction. Now we apply Lemma
\ref{LemTransfVarona}. We have%
\begin{equation}
a_{1}a_{2}\cdots a_{3k}=y_{k+2}\left(  y_{1}y_{2}\cdots y_{k}\right)
^{2}\quad\left(  k\geq1\right)  . \label{Varona13}%
\end{equation}
Indeed, for $k=1$%
\[
a_{1}a_{2}a_{3}=y_{1}^{2}x_{1}y_{2}\frac{\theta y_{2}}{x_{1}}=y_{3}y_{1}^{2},
\]
and assuming that (\ref{Varona13}) holds for some $k\geq1,$
\[
a_{1}a_{2}\cdots a_{3k+3}=y_{k+2}\left(  y_{1}y_{2}\cdots y_{k}\right)
^{2}y_{k+2}y_{k+1}\theta^{2}y_{k+1}=y_{k+3}\left(  y_{1}y_{2}\cdots
y_{k+1}\right)  ^{2}.
\]
Using (\ref{Varona13}) in (\ref{SumVarona}), we obtain%
\begin{align*}
&  \frac{a_{1}}{b_{1}}%
%TCIMACRO{\QATOP{{}}{+}}%
%BeginExpansion
\genfrac{}{}{0pt}{}{{}}{+}%
%EndExpansion
\frac{a_{2}}{b_{2}}%
%TCIMACRO{\QATOP{{}}{+\cdots}}%
%BeginExpansion
\genfrac{}{}{0pt}{}{{}}{+\cdots}%
%EndExpansion%
%TCIMACRO{\QATOP{{}}{+}}%
%BeginExpansion
\genfrac{}{}{0pt}{}{{}}{+}%
%EndExpansion
\dfrac{a_{3n-1}}{b_{3n-1}}\\
&  =\frac{y_{1}}{x_{1}}-\frac{y_{1}x_{1}y_{2}}{x_{1}x_{2}y_{1}}+\sum
_{k=1}^{n-1}\left(  -1\right)  ^{k-1}y_{k+2}\left(  y_{1}\cdots y_{k}\right)
^{2}\frac{x_{k+1}y_{k+1}-y_{k+2}+y_{k+2}}{y_{1}\cdots y_{k}x_{k+1}y_{1}\cdots
y_{k+1}x_{k+2}}\\
&  =\frac{y_{1}}{x_{1}}-\frac{y_{2}}{x_{2}}+\sum_{k=1}^{n-1}\left(  -1\right)
^{k-1}\frac{y_{k+2}}{x_{k+2}}=\sum_{k=1}^{n+1}\left(  -1\right)  ^{k-1}%
\frac{y_{k}}{x_{k}},
\end{align*}
which proves Theorem \ref{ThVarona1}.
\end{proof}

Now, with the notations of Theorem \ref{ThVarona1}, we simply observe that%
\begin{align*}
\sum_{k=1}^{n}\left(  -1\right)  ^{k-1}\frac{y_{k}}{x_{k}}  &  =\frac{a_{1}%
}{b_{1}}%
%TCIMACRO{\QATOP{{}}{+}}%
%BeginExpansion
\genfrac{}{}{0pt}{}{{}}{+}%
%EndExpansion
\frac{a_{2}}{b_{2}}%
%TCIMACRO{\QATOP{{}}{+}}%
%BeginExpansion
\genfrac{}{}{0pt}{}{{}}{+}%
%EndExpansion
\frac{a_{3}}{b_{3}}%
%TCIMACRO{\QATOP{{}}{+}}%
%BeginExpansion
\genfrac{}{}{0pt}{}{{}}{+}%
%EndExpansion
\frac{a_{4}}{b_{4}}%
%TCIMACRO{\QATOP{{}}{+}}%
%BeginExpansion
\genfrac{}{}{0pt}{}{{}}{+}%
%EndExpansion
\frac{a_{5}}{b_{5}}%
%TCIMACRO{\QATOP{{}}{+\cdots}}%
%BeginExpansion
\genfrac{}{}{0pt}{}{{}}{+\cdots}%
%EndExpansion%
%TCIMACRO{\QATOP{{}}{+}}%
%BeginExpansion
\genfrac{}{}{0pt}{}{{}}{+}%
%EndExpansion
\dfrac{a_{3n-4}}{b_{3n-4}}\\
&  =\frac{a_{1}}{b_{1}}%
%TCIMACRO{\QATOP{{}}{+}}%
%BeginExpansion
\genfrac{}{}{0pt}{}{{}}{+}%
%EndExpansion
\frac{a_{2}}{b_{2}}%
%TCIMACRO{\QATOP{{}}{+}}%
%BeginExpansion
\genfrac{}{}{0pt}{}{{}}{+}%
%EndExpansion
\frac{x_{1}a_{3}}{x_{1}b_{3}}%
%TCIMACRO{\QATOP{{}}{+}}%
%BeginExpansion
\genfrac{}{}{0pt}{}{{}}{+}%
%EndExpansion
\frac{x_{1}a_{4}}{b_{4}}%
%TCIMACRO{\QATOP{{}}{+}}%
%BeginExpansion
\genfrac{}{}{0pt}{}{{}}{+}%
%EndExpansion
\frac{a_{5}}{b_{5}}%
%TCIMACRO{\QATOP{{}}{+\cdots}}%
%BeginExpansion
\genfrac{}{}{0pt}{}{{}}{+\cdots}%
%EndExpansion%
%TCIMACRO{\QATOP{{}}{+}}%
%BeginExpansion
\genfrac{}{}{0pt}{}{{}}{+}%
%EndExpansion
\dfrac{a_{3n-4}}{b_{3n-4}},
\end{align*}
which proves Theorem \ref{ThVarona}.

\section{Generalization of Hone expansions}

\label{sec:Hone}

In this section, we consider a sequence $F_{n}(X,Y)$ of nonzero polynomials
with non-negative integer coefficients and such that $F_{n}(0,0)=0$ for every
$n\geq0.$ Define the sequence $\left(  x_{n}\right)  _{n\geq0}$ by $x_{0}=1,$
$x_{1}\in\mathbb{Z}_{>0}$ and the recurrence relation%
\begin{equation}
x_{n+2}x_{n}=x_{n+1}^{2}\left(  F_{n}\left(  x_{n},x_{n+1}\right)  +1\right)
\quad\left(  n\geq0\right)  . \label{Rec1}%
\end{equation}
If $x_{n}$ satisfies (\ref{Rec1}), it is clear that%
\[
\theta^{2}x_{n}=F_{n}\left(  x_{n},x_{n+1}\right)  +1.
\]
It is easy to check by induction that $x_{n}$ is a positive integer and that
$x_{n}$ divides $x_{n+1}$ for every $n\geq0.$ Therefore%
\begin{equation}
x_{n+2}\geq x_{n+1}^{2}\frac{x_{n}+1}{x_{n}}>x_{n+1}^{2}\quad\left(
n\geq0\right)  . \label{Rec2}%
\end{equation}
Hence we deduce from (\ref{Rec2}) that $x_{2}\geq2$ and%
\begin{equation}
x_{n}\geq\left(  x_{2}\right)  ^{2^{n-2}}\geq2^{2^{n-2}}\quad\left(
n\geq2\right)  . \label{Min}%
\end{equation}
Now let $h$ be any positive integer. We define the series%
\[
S=\sum_{n=0}^{\infty}\frac{h^{n}}{x_{n+1}}=\frac{1}{h}\sum_{n=1}^{\infty}%
\frac{h^{n}}{x_{n}},
\]
which is convergent by (\ref{Min}). We can apply Theorem \ref{ThHone} above
with $y_{n}=h^{n},$ in which case $\theta y_{n}=h$ and $\theta^{2}y_{n}=1$ for
every $n\geq0.$ By letting $n\rightarrow\infty$ in Theorem \ref{ThHone}, we
get
\begin{equation}
S=\frac{a_{1}}{b_{1}}%
%TCIMACRO{\QATOP{{}}{+}}%
%BeginExpansion
\genfrac{}{}{0pt}{}{{}}{+}%
%EndExpansion
\frac{a_{2}}{b_{2}}%
%TCIMACRO{\QATOP{{}}{+\cdots}}%
%BeginExpansion
\genfrac{}{}{0pt}{}{{}}{+\cdots}%
%EndExpansion%
%TCIMACRO{\QATOP{{}}{+}}%
%BeginExpansion
\genfrac{}{}{0pt}{}{{}}{+}%
%EndExpansion
\dfrac{a_{n}}{b_{n}}%
%TCIMACRO{\QATOP{{}}{+\cdots}}%
%BeginExpansion
\genfrac{}{}{0pt}{}{{}}{+\cdots}%
%EndExpansion
, \label{S}%
\end{equation}
where $a_{1}=1,$ $b_{1}=x_{1}-h,$ and for $k\geq1$%
\begin{align*}
a_{2k}  &  =h,\quad a_{2k+1}=1,\\
\quad b_{2k}  &  =x_{k-1},\quad b_{2k+1}=\frac{\theta^{2}x_{k-1}-1}{x_{k-1}%
}=\frac{F_{k-1}\left(  x_{k-1},x_{k}\right)  }{x_{k-1}}.
\end{align*}
Assume that $x_{1}>h.$ As $F_{n}(0,0)=0$ and $x_{n}$ divides $x_{n+1}$ for
every $n\geq0,$ we see that $a_{n}$ and $b_{n}$ are positive integers for
every $n\geq1$ in this case. If moreover $h=1$ and $F_{n}(x_{n},x_{n+1}%
)+1=F(x_{n+1})$ for some $F(X) \in{\mathbb{Z}}_{\geq0}[X]$, then (\ref{S})
gives the expansion in regular continued fraction of $S,$ already obtained by
Hone in \cite{Hone}.

\begin{example}
\label{ex:1} \label{ExHone1}The simplest of all sequences $(x_{n})$ satisfy
the recurrence relation%
\begin{equation}
x_{n+2}x_{n}=x_{n+1}^{2}\left(  x_{n}+1\right)  \quad\left(  n\geq0\right)  ,
\label{Simplest}%
\end{equation}
which means that $F_{n}(X,Y)=X$ for every $n\geq0.$ Let $h$ be a positive
integer, and assume that $x_{1}>h.$ Then we can apply the above results and we
get%
\[
S=\frac{1}{x_{1}-h}%
%TCIMACRO{\QATOP{{}}{+}}%
%BeginExpansion
\genfrac{}{}{0pt}{}{{}}{+}%
%EndExpansion
\frac{h}{1}%
%TCIMACRO{\QATOP{{}}{+}}%
%BeginExpansion
\genfrac{}{}{0pt}{}{{}}{+}%
%EndExpansion
\frac{1}{1}%
%TCIMACRO{\QATOP{{}}{+}}%
%BeginExpansion
\genfrac{}{}{0pt}{}{{}}{+}%
%EndExpansion
\frac{h}{x_{1}}%
%TCIMACRO{\QATOP{{}}{+}}%
%BeginExpansion
\genfrac{}{}{0pt}{}{{}}{+}%
%EndExpansion
\frac{1}{1}%
%TCIMACRO{\QATOP{{}}{+}}%
%BeginExpansion
\genfrac{}{}{0pt}{}{{}}{+}%
%EndExpansion
\frac{h}{x_{2}}%
%TCIMACRO{\QATOP{{}}{+\cdots}}%
%BeginExpansion
\genfrac{}{}{0pt}{}{{}}{+\cdots}%
%EndExpansion%
%TCIMACRO{\QATOP{{}}{+}}%
%BeginExpansion
\genfrac{}{}{0pt}{}{{}}{+}%
%EndExpansion
\dfrac{1}{1}%
%TCIMACRO{\QATOP{{}}{+}}%
%BeginExpansion
\genfrac{}{}{0pt}{}{{}}{+}%
%EndExpansion
\dfrac{h}{x_{k}}%
%TCIMACRO{\QATOP{{}}{+\cdots}}%
%BeginExpansion
\genfrac{}{}{0pt}{}{{}}{+\cdots}%
%EndExpansion
.
\]
In the case where $x_{1}=1$ and $h=1,$ we can apply this result starting with
$x_{2}=2$ in place of $x_{1}$ and we get%
\[
S-1=\frac{1}{1}%
%TCIMACRO{\QATOP{{}}{+}}%
%BeginExpansion
\genfrac{}{}{0pt}{}{{}}{+}%
%EndExpansion
\frac{1}{x_{1}}%
%TCIMACRO{\QATOP{{}}{+}}%
%BeginExpansion
\genfrac{}{}{0pt}{}{{}}{+}%
%EndExpansion
\frac{1}{1}%
%TCIMACRO{\QATOP{{}}{+}}%
%BeginExpansion
\genfrac{}{}{0pt}{}{{}}{+}%
%EndExpansion
\frac{1}{x_{2}}%
%TCIMACRO{\QATOP{{}}{+\cdots}}%
%BeginExpansion
\genfrac{}{}{0pt}{}{{}}{+\cdots}%
%EndExpansion%
%TCIMACRO{\QATOP{{}}{+}}%
%BeginExpansion
\genfrac{}{}{0pt}{}{{}}{+}%
%EndExpansion
\dfrac{1}{1}%
%TCIMACRO{\QATOP{{}}{+}}%
%BeginExpansion
\genfrac{}{}{0pt}{}{{}}{+}%
%EndExpansion
\dfrac{1}{x_{k}}%
%TCIMACRO{\QATOP{{}}{+\cdots}}%
%BeginExpansion
\genfrac{}{}{0pt}{}{{}}{+\cdots}%
%EndExpansion
.
\]
Hence, assuming that $x_{0}=x_{1}=1$ and $x_{n}$ satisfies (\ref{Simplest}),
we have%
\[
\left[  1;1,x_{1},1,x_{2},1,x_{3},1,x_{4},\ldots,1,x_{k},\ldots\right]
=\sum_{n=1}^{\infty}\frac{1}{x_{n}},
\]
which is (\ref{Nouv1}).
\end{example}

If the condition $x_{1}>h$ is not realized, let $N\geq0$ such that
$x_{N+1}>h.$ Then there exist a positive integer $t$ such that%
\[
S=\frac{t}{x_{N}}+h^{N}\sum_{n=0}^{\infty}\frac{h^{n}}{x_{n+N+1}}=\frac
{t}{x_{N}}+h^{N}\sum_{n=0}^{\infty}\frac{h^{n}}{x_{n+1}^{\prime}}=\frac
{t}{x_{N}}+h^{N}S^{\prime},
\]
where $x_{n}^{\prime}=x_{n+N}$ satisfies $x_{1}^{\prime}>h$ and the recurrence
relation%
\[
x_{n+2}^{\prime}x_{n}^{\prime}=\left(  x_{n+1}^{\prime}\right)  ^{2}\left(
F_{n+N}\left(  x_{n}^{\prime},x_{n+1}^{\prime}\right)  +1\right)  \quad\left(
n\geq0\right)  .
\]
Hence we can apply the above result to $S^{\prime}$ and we get%
\[
S=\frac{t}{x_{N}}+\frac{h^{N}a_{1}^{\prime}}{b_{1}^{\prime}}%
%TCIMACRO{\QATOP{{}}{+}}%
%BeginExpansion
\genfrac{}{}{0pt}{}{{}}{+}%
%EndExpansion
\frac{a_{2}^{\prime}}{b_{2}^{\prime}}%
%TCIMACRO{\QATOP{{}}{+\cdots}}%
%BeginExpansion
\genfrac{}{}{0pt}{}{{}}{+\cdots}%
%EndExpansion%
%TCIMACRO{\QATOP{{}}{+}}%
%BeginExpansion
\genfrac{}{}{0pt}{}{{}}{+}%
%EndExpansion
\dfrac{a_{n}^{\prime}}{b_{n}^{\prime}}%
%TCIMACRO{\QATOP{{}}{+\cdots}}%
%BeginExpansion
\genfrac{}{}{0pt}{}{{}}{+\cdots}%
%EndExpansion
.
\]
This proves that%
\begin{equation}
\frac{1}{S}=\frac{x_{N}}{t}%
%TCIMACRO{\QATOP{{}}{+}}%
%BeginExpansion
\genfrac{}{}{0pt}{}{{}}{+}%
%EndExpansion
\frac{h^{N}a_{1}^{\prime}x_{N}}{b_{1}^{\prime}}%
%TCIMACRO{\QATOP{{}}{+}}%
%BeginExpansion
\genfrac{}{}{0pt}{}{{}}{+}%
%EndExpansion
\frac{a_{2}^{\prime}}{b_{2}^{\prime}}%
%TCIMACRO{\QATOP{{}}{+\cdots}}%
%BeginExpansion
\genfrac{}{}{0pt}{}{{}}{+\cdots}%
%EndExpansion%
%TCIMACRO{\QATOP{{}}{+}}%
%BeginExpansion
\genfrac{}{}{0pt}{}{{}}{+}%
%EndExpansion
\dfrac{a_{n}^{\prime}}{b_{n}^{\prime}}%
%TCIMACRO{\QATOP{{}}{+\cdots}}%
%BeginExpansion
\genfrac{}{}{0pt}{}{{}}{+\cdots}%
%EndExpansion
, \label{1/S}%
\end{equation}
which gives an expansion of $S^{-1}$ in a continued fraction whose terms are
positive integers.

\begin{example}
\label{ex:2} \label{ExHone2}Assume again that $x_{n}$ satisfies
(\ref{Simplest}), and take for example $x_{1}=1$ and $h=3.$ We have here%
\[
S=\sum_{n=0}^{\infty}\frac{3^{n}}{x_{n+1}}.
\]
with $x_{1}=1,$ $x_{2}=2,$ $x_{3}=8.$ Hence $N=2$ and $a_{1}^{\prime}=1,$
$b_{1}^{\prime}=5,$ and for $k\geq1$%
\[
a_{2k}^{\prime}=3,\quad a_{2k+1}^{\prime}=1,\quad b_{2k}^{\prime}%
=x_{k+2},\quad b_{2k+1}^{\prime}=1.
\]
By applying (\ref{1/S}), we obtain%
\[
\frac{1}{S}=\frac{2}{5}%
%TCIMACRO{\QATOP{{}}{+}}%
%BeginExpansion
\genfrac{}{}{0pt}{}{{}}{+}%
%EndExpansion
\frac{18}{5}%
%TCIMACRO{\QATOP{{}}{+}}%
%BeginExpansion
\genfrac{}{}{0pt}{}{{}}{+}%
%EndExpansion
\frac{3}{x_{3}}%
%TCIMACRO{\QATOP{{}}{+}}%
%BeginExpansion
\genfrac{}{}{0pt}{}{{}}{+}%
%EndExpansion
\frac{1}{1}%
%TCIMACRO{\QATOP{{}}{+}}%
%BeginExpansion
\genfrac{}{}{0pt}{}{{}}{+}%
%EndExpansion
\dfrac{3}{x_{4}}%
%TCIMACRO{\QATOP{{}}{+}}%
%BeginExpansion
\genfrac{}{}{0pt}{}{{}}{+}%
%EndExpansion
\dfrac{1}{1}%
%TCIMACRO{\QATOP{{}}{+\cdots}}%
%BeginExpansion
\genfrac{}{}{0pt}{}{{}}{+\cdots}%
%EndExpansion%
%TCIMACRO{\QATOP{{}}{+}}%
%BeginExpansion
\genfrac{}{}{0pt}{}{{}}{+}%
%EndExpansion
\dfrac{3}{x_{k}}%
%TCIMACRO{\QATOP{{}}{+}}%
%BeginExpansion
\genfrac{}{}{0pt}{}{{}}{+}%
%EndExpansion
\dfrac{1}{1}%
%TCIMACRO{\QATOP{{}}{+\cdots}}%
%BeginExpansion
\genfrac{}{}{0pt}{}{{}}{+\cdots}%
%EndExpansion
.
\]

\end{example}

\section{Generalization of Varona expansions}

\label{sec:Varona}

With the notations of Section \ref{sec:Hone}, we define now the series%
\[
T=\sum_{n=0}^{\infty}\left(  -1\right)  ^{n}\frac{h^{n}}{x_{n+1}}=\sum
_{n=1}^{\infty}\left(  -1\right)  ^{n-1}\frac{h^{n-1}}{x_{n}}.
\]
Here we have $y_{n}=h^{n-1}.$ By letting $n\rightarrow\infty$ in Theorem
\ref{ThVarona}, we get
\begin{equation}
T=\frac{a_{1}}{b_{1}}%
%TCIMACRO{\QATOP{{}}{+}}%
%BeginExpansion
\genfrac{}{}{0pt}{}{{}}{+}%
%EndExpansion
\frac{a_{2}}{b_{2}}%
%TCIMACRO{\QATOP{{}}{+\cdots}}%
%BeginExpansion
\genfrac{}{}{0pt}{}{{}}{+\cdots}%
%EndExpansion%
%TCIMACRO{\QATOP{{}}{+}}%
%BeginExpansion
\genfrac{}{}{0pt}{}{{}}{+}%
%EndExpansion
\dfrac{a_{n}}{b_{n}}%
%TCIMACRO{\QATOP{{}}{+\cdots}}%
%BeginExpansion
\genfrac{}{}{0pt}{}{{}}{+\cdots}%
%EndExpansion
, \label{T}%
\end{equation}
where%
\begin{align*}
a_{1}  &  =1,\quad a_{2}=hx_{1},\quad a_{3}=h,\quad a_{4}=x_{1},\\
b_{1}  &  =x_{1},\quad b_{2}=\frac{x_{2}}{x_{1}}-h,\quad b_{3}=F_{1}\left(
x_{1},x_{2}\right)  +1-x_{1},\quad b_{4}=1,
\end{align*}
and for $k\geq2$%
\begin{align*}
a_{3k-1}  &  =h^{k},\quad a_{3k}=h^{k-1},\quad a_{3k+1}=1,\\
\quad b_{3k-1}  &  =h^{k-1}\left(  x_{k}-h\right)  ,\quad b_{3k}=\frac
{F_{k}\left(  x_{k},x_{k+1}\right)  }{x_{k}}-1,\quad b_{3k+1}=1.
\end{align*}
Assume that $x_{1}\geq h$ and that $F_{k}(X,Y)\neq X$ for every $k\geq0.$ Then
$b_{2}>0$ since $x_{2}>x_{1}^{2}\geq hx_{1}$ and all the $a_{n}$ and $b_{n}$
are positive integers. If moreover $h=1$ and $x_{1}=1$ we obtain the expansion
in regular continued fraction of $T$ given by Varona in \cite{Varona}.

\begin{example}
\label{ex:3} \label{ExVarona1}Assume that $(x_{n})$ satisfies (\ref{Simplest}%
). Then we cannot apply directly the above results since $F_{k}(X,Y)=X$ for
every $k\geq0$ and therefore $b_{3k}=0$ for every $k\geq1.$ However, by the
concatenation formula we have for $k\geq2$%
\begin{align*}
\frac{a_{3k-1}}{b_{3k-1}}%
%TCIMACRO{\QATOP{{}}{+}}%
%BeginExpansion
\genfrac{}{}{0pt}{}{{}}{+}%
%EndExpansion
\frac{a_{3k}}{0}%
%TCIMACRO{\QATOP{{}}{+}}%
%BeginExpansion
\genfrac{}{}{0pt}{}{{}}{+}%
%EndExpansion
\dfrac{a_{3k+1}}{b_{3k+1}}%
%TCIMACRO{\QATOP{{}}{+}}%
%BeginExpansion
\genfrac{}{}{0pt}{}{{}}{+}%
%EndExpansion
\frac{A}{B}  &  =\frac{a_{3k-1}}{b_{3k-1}+\dfrac{a_{3k}}{a_{3k+1}}b_{3k+1}}%
%TCIMACRO{\QATOP{{}}{+}}%
%BeginExpansion
\genfrac{}{}{0pt}{}{{}}{+}%
%EndExpansion
\frac{a_{3k}A}{a_{3k+1}B}\\
&  =\frac{h}{x_{k}-h+1}%
%TCIMACRO{\QATOP{{}}{+}}%
%BeginExpansion
\genfrac{}{}{0pt}{}{{}}{+}%
%EndExpansion
\frac{A}{B}.
\end{align*}
Then we have for $n\geq3$%
\begin{align*}
\frac{a_{1}}{b_{1}}%
%TCIMACRO{\QATOP{{}}{+}}%
%BeginExpansion
\genfrac{}{}{0pt}{}{{}}{+}%
%EndExpansion
&  \frac{a_{2}}{b_{2}}%
%TCIMACRO{\QATOP{{}}{+}}%
%BeginExpansion
\genfrac{}{}{0pt}{}{{}}{+}%
%EndExpansion
\frac{a_{3}}{b_{3}}%
%TCIMACRO{\QATOP{{}}{+}}%
%BeginExpansion
\genfrac{}{}{0pt}{}{{}}{+}%
%EndExpansion
\dfrac{a_{4}}{b_{4}}%
%TCIMACRO{\QATOP{{}}{+}}%
%BeginExpansion
\genfrac{}{}{0pt}{}{{}}{+}%
%EndExpansion
\frac{a_{5}}{b_{5}}%
%TCIMACRO{\QATOP{{}}{+\cdots}}%
%BeginExpansion
\genfrac{}{}{0pt}{}{{}}{+\cdots}%
%EndExpansion%
%TCIMACRO{\QATOP{{}}{+}}%
%BeginExpansion
\genfrac{}{}{0pt}{}{{}}{+}%
%EndExpansion
\dfrac{a_{3n-5}}{b_{3n-5}}%
%TCIMACRO{\QATOP{{}}{+}}%
%BeginExpansion
\genfrac{}{}{0pt}{}{{}}{+}%
%EndExpansion
\frac{a_{3n-4}}{b_{3n-4}}\\
&  =\frac{1}{x_{1}}%
%TCIMACRO{\QATOP{{}}{+}}%
%BeginExpansion
\genfrac{}{}{0pt}{}{{}}{+}%
%EndExpansion
\frac{hx_{1}}{x_{1}^{-1}x_{2}-h}%
%TCIMACRO{\QATOP{{}}{+}}%
%BeginExpansion
\genfrac{}{}{0pt}{}{{}}{+}%
%EndExpansion
\frac{h}{1}%
%TCIMACRO{\QATOP{{}}{+}}%
%BeginExpansion
\genfrac{}{}{0pt}{}{{}}{+}%
%EndExpansion
\dfrac{x_{1}}{1}%
%TCIMACRO{\QATOP{{}}{+}}%
%BeginExpansion
\genfrac{}{}{0pt}{}{{}}{+}%
%EndExpansion
\frac{h}{x_{2}-h+1}%
%TCIMACRO{\QATOP{{}}{+\cdots}}%
%BeginExpansion
\genfrac{}{}{0pt}{}{{}}{+\cdots}%
%EndExpansion%
%TCIMACRO{\QATOP{{}}{+}}%
%BeginExpansion
\genfrac{}{}{0pt}{}{{}}{+}%
%EndExpansion
\frac{h}{x_{n-2}-h+1}%
%TCIMACRO{\QATOP{{}}{+}}%
%BeginExpansion
\genfrac{}{}{0pt}{}{{}}{+}%
%EndExpansion
\frac{h}{x_{n-1}-h}.
\end{align*}
In the case where $x_{1}=1$ and $h=1,$ we get%
\[
\sum_{k=1}^{n}\frac{\left(  -1\right)  ^{k-1}}{x_{k}}=\frac{1}{1}%
%TCIMACRO{\QATOP{{}}{+}}%
%BeginExpansion
\genfrac{}{}{0pt}{}{{}}{+}%
%EndExpansion
\frac{1}{1}%
%TCIMACRO{\QATOP{{}}{+}}%
%BeginExpansion
\genfrac{}{}{0pt}{}{{}}{+}%
%EndExpansion
\frac{1}{1}%
%TCIMACRO{\QATOP{{}}{+}}%
%BeginExpansion
\genfrac{}{}{0pt}{}{{}}{+}%
%EndExpansion
\dfrac{1}{1}%
%TCIMACRO{\QATOP{{}}{+}}%
%BeginExpansion
\genfrac{}{}{0pt}{}{{}}{+}%
%EndExpansion
\frac{1}{x_{2}}%
%TCIMACRO{\QATOP{{}}{+\cdots}}%
%BeginExpansion
\genfrac{}{}{0pt}{}{{}}{+\cdots}%
%EndExpansion%
%TCIMACRO{\QATOP{{}}{+}}%
%BeginExpansion
\genfrac{}{}{0pt}{}{{}}{+}%
%EndExpansion
\frac{1}{x_{n-2}}%
%TCIMACRO{\QATOP{{}}{+}}%
%BeginExpansion
\genfrac{}{}{0pt}{}{{}}{+}%
%EndExpansion
\frac{1}{x_{n-1}-1}%
\]
for $n\geq3.$ This yields (\ref{Nouv2}) by letting $n\rightarrow\infty.$
\end{example}

\begin{remark}
Hone and Varona in \cite{Hone2} and \cite{Hone3} have recently generalized
their results to sums of a rational number and certain Engel or Pierce series
by giving their expansions in regular continued fractions.
\end{remark}

\section*{Acknowledgement}

The last named author was supported by the Research Institute for Mathematical
Sciences, an International Joint Usage / Research Center located in Kyoto University.

\end{document}